\theoremstyle{plain}%
 \newtheorem{theorem}{Theorem}
 \newtheorem{conjecture}{Conjecture}
 \newtheorem{lemma}{Lemma}% 
\theoremstyle{remark}
\theoremstyle{definition}
\begin{document}

\begin{center}
{\Large Applications of the icosahedral equation for the Rogers--Ramanujan continued fraction} 
\end{center}

\begin{center}
{\textsc{John M. Campbell} }

 \ 

\end{center}

\begin{abstract}
 Let $R(q)$ denote the Rogers--Ramanujan continued fraction for $|q| < 1$. By applying the {\tt RootApproximant} command in the Wolfram language to 
 expressions involving the theta function $f(-q) := (q;q)_{\infty}$ given in modular relations due to Yi, this provides a systematic way of obtaining 
 experimentally discovered evaluations for $R\big(e^{-\pi\sqrt{r}}\big)$, for $r \in \mathbb{Q}_{> 0}$. We succeed in applying this approach to obtain 
 explicit closed forms, in terms of radicals over $\mathbb{Q}$, for the Rogers--Ramanujan continued fraction that have not previously been discovered 
 or proved. We prove our closed forms using the icosahedral equation for $R$ together with closed forms for and modular relations associated with 
 Ramanujan's $G$- and $g$-functions. An especially remarkable closed form that we introduce and prove is for $R\big( e^{-\pi \sqrt{48/5} } \big)$, in view 
 of the computational difficulties surrounding the application of an order-25 modular relation in the evaluation of $G_{48/5}$. 
\end{abstract}

\noindent {\footnotesize \emph{MSC:} 11F03, 11A55}

\noindent {\footnotesize \emph{Keywords:} Rogers--Ramanujan continued fraction, modular equation, elliptic lambda function, theta function, class invariant}

\section{Introduction}
 Letting $|q| < 1$, the \emph{Rogers--Ramanujan continued fraction} is such that 
\begin{equation}\label{Rdefinition}
 R(q) = \frac{q^{1/5}}{1 + \cfrac{q}{1 + \cfrac{q^2}{1 + \cfrac{q^3}{1 + 
 \begin{matrix} \null \\ 
 \ddots 
 \end{matrix} } } } }, 
\end{equation}
 and one of the most fundamental properties concerning this continued fraction was introduced by Rogers \cite{Rogers189394} and is such that 
\begin{equation*}
 R(q) = q^{1/5} \prod_{n=1}^{\infty} \frac{ \left( 1 - q^{5n-1} \right) \left( 1 - q^{5n-4} \right) 
 }{ \left( 1- q^{5n-2} \right) \left( 1 - q^{5n-3} \right)}. 
\end{equation*}
 Among the most important identities for the function in \eqref{Rdefinition} are such that 
\begin{equation}\label{importanttheta1}
 \frac{1}{R(q)} - 1 - R(q) = \frac{ f\left( -q^{1/5} 
 \right) }{ q^{1/5} f\left( -q^{5} \right) } 
\end{equation}
 and 
\begin{equation}\label{importanttheta2}
 \frac{1}{R^{5}(q)} - 11 - R^{5}(q) = \frac{ f^{6}(-q) }{ q f^{6}\left( -q^{5} \right) }, 
\end{equation}
 writing $f(-q)$ in place of the the theta function that may be defined via either side of the formulation of Euler's pentagonal number theorem whereby 
\begin{equation*}
 \sum_{n = -\infty}^{\infty} (-1)^{n} q^{n(3n-1)/2} = (q;q)_{\infty}, 
\end{equation*}
 writing $(a;q)_{\infty} := \prod_{k=0}^{\infty} (1 - a q^k)$. As in \cite{ChernTang2021}, we express that there is a rich history associated with 
 modular relations involving the $R$-function and highlight the recursions whereby 
\begin{equation}\label{2777770727470717174717274757AM1A}
 \left(R\left(q^2\right) - R^2(q)\right) \left(1+R(q) R^2\left(q^2\right)\right) 
 = 2 R(q) R^3\left(q^2\right) 
\end{equation}
 and 
\begin{equation}\label{202222224202121272623292A2M2A}
 \left(R\left(q^3\right)-R(q)^3\right) \left(1+R(q) R\left(q^3\right)^3\right) 
 = 3 R(q)^2 R\left(q^3\right)^2. 
\end{equation}
 Of a similar importance, relative to the modular relations in \eqref{2777770727470717174717274757AM1A}
 and \eqref{202222224202121272623292A2M2A}, 
 is Ramanujan's formula 
\begin{equation}\label{202401121247twqelkveAM3A}
 R^5(q) 
 = R\left(q^5\right) 
 \frac{ 1-2 R\left(q^5\right)+4 R^2\left(q^5\right) - 
 3 R^3\left(q^5\right) + R^4\left(q^5\right) }{ 
 1+3 R\left(q^5\right)+4 R^2\left(q^5\right)+2 R^3\left(q^5\right) + R^4\left(q^5\right) } 
\end{equation}
 included in the first letter Ramanujan sent to Hardy, referring to the survey on \eqref{202401121247twqelkveAM3A} 
 its proofs given by Gugg \cite{Gugg2009}. 
 Recursions for the $R$-function as in \eqref{2777770727470717174717274757AM1A}--\eqref{202401121247twqelkveAM3A}
 typically are not sufficient in terms of the problem of determining an explicit closed form for 
 the Rogers--Ramanujan continued fraction, for a given argument. 
 In this paper, we introduce an experimental approach toward evaluating the Rogers--Ramanujan continued fraction, 
 relying on 
 the \emph{icosahedral equation} for $R$ \cite{Duke2005} 
 together with the experimental use of the {\tt RootApproximant} 
 command in the Wolfram language. We apply this approach 
 to obtain and prove new closed forms for $R$. 

 Our method also relies on closed forms for Ramanujan's class invariants
 together with modular relations for such invariants, 
 and may be applied broadly to produce new evaluations for $R$. 
 This approach 
 may also be applied 
 to produce new and simplified proofs of previously published 
 evaluations for $R$. 
 This is inspired by what Berndt et al.\ \cite{BerndtChanZhang1996} 
 describe as Ramanathan's \emph{uniform} 
 approach \cite{Ramanathan1987} toward the evaluation of the Rogers--Ramanujan continued fraction. 
 Ramanathan's method relied on class groups for imaginary quadratic fields and Kronecker's limit formula, 
 and Ramanathan applied this method to evaluate $R\big( e^{- \sqrt{r} \pi} \big)$ 
 for a number of rational values $r$ \cite{Ramanathan1987}. 

 There is a rich history surrounding the evaluation of 
\begin{equation}\label{richhistory}
 R\left(e^{-\pi \sqrt{\frac{n}{d}}}\right)
\end{equation}
 for a positive rational $\frac{n}{d}$ such that the denominator $d$, 
 with the fraction $\frac{n}{d}$ written in simplest form, 
 is divisible by $5$. 
 Notable surveys on such evaluations are given in \cite{Paek2021,YiPaek2022}. 
 With regard to these surveys \cite{Paek2021,YiPaek2022} 
 and the references therein, 
 evaluations for \eqref{richhistory}, 
 for denominators of the specified form, 
 are known for fractions $\frac{n}{d}$ equal 
 to $ \frac{1}{5}$, $ \frac{2}{5}$, $ \frac{3}{5}$, $ \frac{4}{5}$, 
 $\frac{6}{5}$, $ \frac{8}{5}$, $ \frac{9}{5}$, $ \frac{12}{5}$, 
 $ \frac{14}{5}$, $ \frac{16}{5}$, $ \frac{18}{5}$, $ \frac{24}{ 5}$, $ \frac{28}{5}$, $ \frac{32}{5}$, $ \frac{36}{5}$, 
 $\frac{44}{5}$, $ \frac{52}{5}$, $\frac{56}{5}$, $\frac{64}{5}$, $ \frac{68}{5}$, $ \frac{72}{5}$, $\frac{116}{5}$, 
 $\frac{164}{5}$, $ \frac{212}{5}$, $ \frac{404}{5}$, $ \frac{1}{10}$, $ \frac{1}{15}$, $ \frac{2}{15}$, $ \frac{4}{15}$, 
 $ \frac{8}{15}$, $ \frac{3}{20}$, $ \frac{9}{20}$, $ \frac{2}{25}$, $ \frac{4}{25}$, $ \frac{8}{25}$, $ \frac{12}{25}$, 
 $ \frac{2}{35}$, $ \frac{4}{35}$, $\frac{8}{35}$, $ \frac{1}{45}$, $\frac{2}{45}$, $ \frac{4}{45}$, $ \frac{8}{45}$, 
 $\frac{4}{55}$, $ \frac{1}{60}$, $ \frac{4}{65}$, $ \frac{4}{75}$, $ \frac{3}{80}$, $ \frac{4}{85}$, $ \frac{4}{145}$, 
 $ \frac{1}{180}$, $ \frac{4}{205}$, $ \frac{1}{240}$, $ \frac{4}{265}$, $ \frac{4}{445}$, and $ \frac{4}{505}$. 
 We succeed in proving new closed forms for \eqref{richhistory}, 
 for values $\frac{n}{d}$ not recorded in surveys such as 
 \cite{Paek2021,YiPaek2022} on closed forms for the Rogers--Ramanujan continued fraction. In particular, 
 we obtain explicit evaluations for each of the following expressions: 
 $$ R\left( e^{-\pi \sqrt{\frac{26}{5}}} \right), \ \ 
 R\left( e^{-\pi \sqrt{\frac{38}{5}}} \right), \ \ \text{and} \ \ 
 R\left( e^{-\pi \sqrt{\frac{48}{5}}} \right). $$
 We conclude with a conjectural closed form for 
 $ R\big( e^{-\pi \sqrt{16/15}} \big)$. 

\section{Preliminaries and background}
 The \emph{elliptic lambda function} satisfies 
\begin{equation}\label{202401307775717A7M71A}
 \lambda^{\ast}(r) = \frac{ \theta_{2}^{2}\left( 0, e^{-\pi \sqrt{r}} \right) }{ 
 \theta_{3}^{2}\left( 0, e^{-\pi \sqrt{r}} \right) }, 
\end{equation}
 where the Jacobi theta functions $\theta_{2}$ and $\theta_{3}$ are such that 
\begin{equation}\label{202401306443MA1A}
 \theta_{2}(z, q) = 2 q^{1/4} \sum_{n=0}^{\infty} 
 q^{n(n+1)} \cos((2n+1)z) 
\end{equation}
 and 
\begin{equation}\label{theta3definition}
 \theta_{3}(z, q) = 1 + 2 \sum_{n=1}^{\infty} q^{n^2} \cos(2 n z). 
\end{equation}
 The special function in \eqref{202401307775717A7M71A} is closely related to 
 Ramanujan's $G$- and $g$-functions, which may, respectively, be defined so that 
 $$ G_{n} = \frac{1}{ 2^{1/4} e^{-\pi \sqrt{n}/24} } \prod_{j=0}^{\infty} 
 \left( 1 + e^{-(2j+1) \pi \sqrt{n}} \right) $$
 and 
 $$ g_{n} = \frac{1}{2^{1/4} e^{-\pi \sqrt{n}/24} } \prod_{j=0}^{\infty} 
 \left( 1 - e^{-(2j+1) \pi \sqrt{n}} \right). $$
 Explicitly, we have that 
\begin{equation}\label{202480828083878387A8M1A}
 \lambda^{\ast}(n) = \frac{1}{2} \left( \sqrt{1 + G_{n}^{-12}} - \sqrt{1 - G_{n}^{-12}} \right) 
\end{equation}
 and that $$ \lambda^{\ast}(n) = g_{n}^{6} 
 \left( \sqrt{g_{n}^{12} + g_{n}^{-12}} - g_{n}^{6} \right). $$
 
 The Jacobi theta functions in \eqref{202401306443MA1A} and \eqref{theta3definition} 
 play a key role in the below formulation of the 
 \emph{icosahedral equation}, which is of key importance in our work. 
 The icosahedral equation for the Rogers--Ramanujan continued fraction \cite{Duke2005} 
 may be formulated in such a way so that 
\begin{equation}\label{icosahedral}
 1728 J(\tau) = -\frac{\left(r^{20}-228 r^{15}+494 r^{10}+228 r^5+1\right)^3}{r^5 \left(r^{10}+11 r^5-1\right)^5}, 
\end{equation}
 where $J$ denotes \emph{Klein's absolute invariant}, which may be defined as follows, 
 and where $r = r(\tau) = R\big( e^{2 \pi i \tau} \big)$. 
 By analogy with \eqref{202401307775717A7M71A}, we write 
 $$ \lambda(\tau) = \frac{ \theta_{2}^{4}(e^{i \pi \tau}) }{ \theta_{3}^{4}(e^{i \pi \tau}) }. $$
 The $J$-function may then be defined so that 
 $$ J(\tau) = \frac{4}{27} \frac{ \left(1- \lambda(\tau)+\lambda^2(\tau)\right)^3}{ \lambda^2(\tau)
 (1 - \lambda(\tau))^2 }. $$ 

 Duke \cite{Duke2005} proved that if $\tau$ is in an imaginary quadratic field, then $r(\tau)$ can be expressed in terms of radicals over $\mathbb{Q}$. 
 However, it is not enough to have a closed form for $J(\tau)$ to express $r(\tau)$ with nested radicals over $\mathbb{Q}$. Typically, it is very difficult 
 to obtain such expressions for the Rogers--Ramanujan continued fraction, as evidenced by the unknown values for \eqref{richhistory} suggested by the 
 surveys referenced above \cite{Paek2021,YiPaek2022}. Since the icosahedral equation in \eqref{icosahedral} provides a degree-60 polynomial identity 
 for $r(\tau)$ with algebraic coefficients, it is typically not feasible to simply 
 solve for $r(\tau)$ according to this polynomial identity. 
 Furthermore, computing $\lambda(\tau)$ for rational values $\tau$ is generally nontrivial, 
 and state-of-the-art Computer Algebra Systems such as Mathematica are generally not capable of 
 solving for required values involved in higher-order modular equations, 
 as in the order-25 modular relation shown below. 

 One of the most basic modular relations involving Ramanujan's $g$- and $G$-functions is such that 
\begin{equation}\label{20240203851AM2A}
 g_{4n} = 2^{1/4} g_{n} G_{n} 
\end{equation}
 for $n > 0$ \cite[p.\ 187]{Berndt1998}. 
 One of the keys to all of our proofs 
 is given by the modular relation given as follows and appearing in an equivalent form 
 in Part V of Ramanujan's Notebooks \cite[p.\ 222]{Berndt1998}. If 
\begin{equation}\label{alphafor25}
 \alpha = \frac{1}{2} \left( 1 - \frac{\sqrt{G_{n}^{24} - 1}}{G_{n}^{12}} \right) 
\end{equation}
 and 
\begin{equation}\label{betafor25}
 \beta = \frac{1}{2} \left( 1 - \frac{\sqrt{G^{24}_{n/25} - 1} }{G^{12}_{n/25}} \right)
\end{equation}
 and 
\begin{equation}\label{202747017277797117271P7M1787887A}
 P = \sqrt[12]{16 \alpha \beta (1-\alpha ) (1-\beta )} 
\end{equation}
 and 
\begin{equation}\label{Qfor25}
 Q = \sqrt[8]{\frac{\beta (1-\beta )}{\alpha (1-\alpha )}}, 
\end{equation}
 then 
\begin{equation}\label{modular25}
 Q+\frac{1}{Q} = -2 \left(P-\frac{1}{P}\right). 
\end{equation}

 Our experimental approach toward deriving closed forms for $R$ from the Theorem due to Yi given below relies on the application of the {\tt 
 RootApproximant} command to expressions of the form shown in \eqref{20240130874777A77M1A}. For example, for the $n = \frac{13}{2}$ case of 
 Theorem \ref{Yimaintheorem}, our experimental use of the {\tt RootApproximant} command in the Wolfram Language has led us to conjecture that 
\begin{equation}\label{20324012571622p2pqp2}
 s\left( \frac{13}{2} \right) = -37296+16705 \sqrt{5}+2 \sqrt{65 \left(10716449-4792536 \sqrt{5}\right)}, 
\end{equation}
 and this is proved in an equivalent way in Section \ref{20240131852AM1A}. 
 It seems that this approach toward the experimental application of Yi's results \cite{Yi2001Evaluations} 
 has not been considered 
 previously in publications related to \cite{Yi2001Evaluations}, 
 including \cite{BaruahSaikia2007,Yi2001construction,Yi2004,YiLeePaek2006}. 
 The following result due to Yi can be seen to be equivalent to 
 the previously known modular relation shown in \eqref{importanttheta2}. 

\begin{theorem}\label{Yimaintheorem}
 (Yi, 2001) Setting $q = q_{n} = e^{-2 \pi \sqrt{n/5}}$ and 
\begin{equation}\label{20240130874777A77M1A}
 s_{n} = \frac{ f^{6}(-q) }{ 5 \sqrt{5} q f^{6}(-q^5) }, 
\end{equation}
 we have that 
 $$ R^{5}\left( e^{-2 \pi \sqrt{n/5}} \right) = \sqrt{a^2 + 1} - a, $$
 for $2 a = 5 \sqrt{5} s_{n} + 11$ \cite{Yi2001Evaluations}. 
\end{theorem}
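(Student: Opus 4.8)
The plan is to derive the stated evaluation directly from the theta-function identity \eqref{importanttheta2}, which the surrounding text identifies as equivalent to the present claim. Writing $R = R(q)$ with $q = q_n = e^{-2\pi\sqrt{n/5}}$, the starting point is
\begin{equation*}
 \frac{1}{R^{5}} - 11 - R^{5} = \frac{f^{6}(-q)}{q\, f^{6}(-q^{5})}.
\end{equation*}
Comparing the right-hand side with the definition \eqref{20240130874777A77M1A} of $s_{n}$, one sees that $\frac{f^{6}(-q)}{q\, f^{6}(-q^{5})} = 5\sqrt{5}\, s_{n}$, so the identity rearranges to $\frac{1}{R^{5}} - R^{5} = 5\sqrt{5}\, s_{n} + 11$.

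First I would substitute the defining relation $2a = 5\sqrt{5}\, s_{n} + 11$ to reduce this to the single equation $\frac{1}{R^{5}} - R^{5} = 2a$. Setting $x = R^{5}$ and clearing the denominator then yields the quadratic $x^{2} + 2ax - 1 = 0$, whose roots are $x = -a \pm \sqrt{a^{2}+1}$.

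The remaining step is to justify the selection of root. Since $0 < q < 1$ for every $n > 0$, the product formula for the Rogers--Ramanujan continued fraction shows $R(q) > 0$, and hence $x = R^{5} > 0$. As $\sqrt{a^{2}+1} - a > 0$ always holds while $-a - \sqrt{a^{2}+1} < 0$, the correct root is $x = \sqrt{a^{2}+1} - a$, which is exactly the asserted evaluation. (One may further note that $f(-q) = (q;q)_{\infty} > 0$ forces $s_{n} > 0$, so $a > 11/2$, consistent with $R^{5} \in (0,1)$.)

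I do not anticipate a genuine obstacle: once \eqref{importanttheta2} is in hand the argument is purely algebraic, and the only point requiring care is the sign choice, which follows from the positivity of $f(-q)$ and of $R(q)$ on $(0,1)$. The substance of the result therefore lies in the underlying modular identity \eqref{importanttheta2} rather than in this reduction.
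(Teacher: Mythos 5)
Your proposal is correct and follows exactly the route the paper indicates: the paper offers no separate proof of this theorem but states that it "can be seen to be equivalent to" the modular relation \eqref{importanttheta2}, and your reduction (substituting $5\sqrt{5}\,s_n$ for the eta-quotient, solving the resulting quadratic in $R^5$, and selecting the positive root via positivity of $R(q)$ for $0<q<1$) is precisely that equivalence made explicit.
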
 

    A remarkable aspect about how we have experimentally discovered closed forms from     Theorem \ref{Yimaintheorem} is given by how      the companion  
   relation in \eqref{importanttheta1} to \eqref{importanttheta2}    cannot be applied similarly via the {\tt RootApproximant} command.   
  To prove our results obtained from experimentally discovered closed forms for 
 \eqref{20240130874777A77M1A}, we rely on 
 the icosahedral equation. 
 It seems that this approach has not been considered in 
 past work related to Duke's work on the icosahedral equation \cite{Duke2005}, including 
 \cite{AndrewsBerndt2018,Berndt2006,Maier2009}. 

 A notable reference concerning special values of the Rogers--Ramanujan continued fraction is due to Berndt and Chan \cite{BerndtChan1995}. 
 Following Berndt and Chan's work in \cite{BerndtChan1995}, we recall the formulas
 $$ R\left( e^{-2\pi} \right) 
 = \sqrt{\frac{5 + \sqrt{5}}{2}} - \frac{\sqrt{5} + 1}{2} $$
 and 
 $$ R\left( e^{-2\pi \sqrt{5}} \right) = \frac{\sqrt{5}}{1 + \left( 
 5^{3/4} \left( \frac{\sqrt{5} - 1}{2} \right)^{5/2} - 1 \right)^{1/5}} - \frac{\sqrt{5} + 1}{2} $$
 given, respectively, in Ramanujan's first and second letters to Hardy. 
 Berndt and Chan \cite{BerndtChan1995} proved a number of 
 evaluations for $R$ that were discovered by Ramanujan 
 and that were not proved prior to \cite{BerndtChan1995}, 
 including the evaluation 
 $$ R\left( e^{-8\pi} \right) = \sqrt{a^2 + 1} - a, $$ where 
 $$ a = \frac{1}{2} \left(1+\frac{\sqrt{5} \left(3+\sqrt{2}+\sqrt{2} \sqrt[4]{5}-\sqrt{5}\right)}{3+\sqrt{2}-\sqrt{2} \sqrt[4]{5}-\sqrt{5}}\right). $$ 
 Our new results as in Theorem \ref{theorem38over5} 
 below are inspired by \cite{BerndtChan1995} 
 and are denoted by analogy with the main Theorems in \cite{BerndtChan1995}. 

\section{A new closed form related to $g_{130}$}\label{20240131852AM1A}
 Our discovery and our proof of the new result highlighted in Theorem \ref{theoremg130} 
 below require a ``twofold'' application of the {\tt RootApproximant} command, 
 in the sense that this command was the key to our experimental discovery of the conjectured value of 
 $s\big( \frac{13}{2} \big)$ shown in \eqref{20324012571622p2pqp2}
 and that the {\tt RootApproximant} function was required in our experimental discovery of the 
 value of $\lambda^{\ast}\big( \frac{26}{5} \big)$ 
 that is of key importance in our proof of Theorem \ref{theoremg130}. 
 It is not possible to derive this value from that of $g_{130}$ or $G_{130}$ using 
 the order-25 modular relation in \eqref{modular25}, 
 in the sense that current Computer Algebra Systems cannot solve for the 
 desired value of $G_{26/5}$. 
 
\begin{theorem}\label{theoremg130}
 The evaluation 
 $$ R^{5}\left( e^{-\pi \sqrt{\frac{26}{5}}} \right) 
 = \sqrt{a^2+1}-a$$
 holds, where 
\begin{equation}\label{2808284808183880818181808A8M1A}
 a = 208818-93240 \sqrt{5}-57825 \sqrt{13}+25900 \sqrt{65}. 
\end{equation} 
\end{theorem}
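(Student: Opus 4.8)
The plan is to combine Theorem~\ref{Yimaintheorem} with the icosahedral equation~\eqref{icosahedral}, thereby reducing the whole problem to the evaluation of a single $j$-invariant at a CM point. First observe that $e^{-\pi\sqrt{26/5}}=e^{-2\pi\sqrt{(13/2)/5}}$, so the desired value is the $n=\tfrac{13}{2}$ instance of Theorem~\ref{Yimaintheorem}. By that theorem it suffices to produce the real number $a=\tfrac12\big(R^{-5}-R^{5}\big)$, where $R=R\big(e^{-\pi\sqrt{26/5}}\big)$: combining \eqref{importanttheta2} with the definition \eqref{20240130874777A77M1A} gives $R^{-5}-11-R^{5}=5\sqrt5\,s_{13/2}$, hence $2a=5\sqrt5\,s_{13/2}+11$, and solving $R^{10}+2aR^{5}-1=0$ for the root in $(0,1)$ yields $R^{5}=\sqrt{a^{2}+1}-a$. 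Thus the theorem is equivalent to the assertion that $a$ equals the expression in \eqref{2808284808183880818181808A8M1A}, equivalently that $s_{13/2}$ equals \eqref{20324012571622p2pqp2}.

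Next I would feed $R$ into \eqref{icosahedral} at $\tau=i\sqrt{13/10}$, for which $r(\tau)=R\big(e^{2\pi i\tau}\big)=R$. Writing $u=R^{5}$ and using $u^{-1}-u=2a$, the right-hand side of \eqref{icosahedral} collapses: one finds $r^{10}+11r^{5}-1=-(2a-11)\,u$ and $r^{20}-228r^{15}+494r^{10}+228r^{5}+1=u^{2}\big(4a^{2}+456a+496\big)$, so that after cancelling the factor $u^{30}$ the icosahedral equation becomes the one-variable relation
\[
1728\,J(\tau)=\frac{\left(4a^{2}+456a+496\right)^{3}}{\left(2a-11\right)^{5}}.
\]
This is a polynomial equation of degree six in $a$, whose correct root is singled out by the requirement $0<R^{5}<1$, i.e.\ $a>0$ (and note $2a-11=5\sqrt5\,s_{13/2}>0$). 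Everything thus reduces to evaluating $1728\,J(\tau)=j(\tau)$ at $\tau=i\sqrt{13/10}$.

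To evaluate $J(\tau)$ I would use $J(\tau)=\tfrac{4}{27}(1-\lambda+\lambda^{2})^{3}/\big(\lambda^{2}(1-\lambda)^{2}\big)$ with $\lambda=\lambda(\tau)=\lambda^{\ast}(13/10)^{2}$, the last equality holding because the nome attached to $\tau=i\sqrt{13/10}$ is $e^{-\pi\sqrt{13/10}}$. Since $e^{-\pi\sqrt{26/5}}=\big(e^{-\pi\sqrt{13/10}}\big)^{2}$, the value $\lambda^{\ast}(13/10)$ is recovered from $\lambda^{\ast}(26/5)$ by the elementary degree-two transformation relating the two. The genuinely new ingredient is therefore the closed form for $\lambda^{\ast}(26/5)$, discovered experimentally with {\tt RootApproximant}. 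To justify it rigorously I would begin from the known closed form for $g_{130}$, equivalently $\lambda^{\ast}(130)$ via \eqref{202480828083878387A8M1A}, and verify that the candidate value of $\lambda^{\ast}(26/5)$, together with $\lambda^{\ast}(130)$, satisfies the order-$25$ modular relation~\eqref{modular25}: one takes $\alpha$ as in \eqref{alphafor25} with $n=130$ and $\beta$ as in \eqref{betafor25}, substitutes the candidate, and checks \eqref{modular25} as an exact algebraic identity. Because $\big(\lambda^{\ast}(130)^{2},\lambda^{\ast}(26/5)^{2}\big)$ is a solution of this degree-five modular equation and the candidate is the unique root lying in $(0,1)$ at the prescribed numerical location, the two must coincide.

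Finally, with $\lambda^{\ast}(26/5)$ in hand I would compute $\lambda(\tau)$, then $j(\tau)$, substitute into the displayed reduction of \eqref{icosahedral}, and solve for the positive root $a$, confirming that it equals \eqref{2808284808183880818181808A8M1A}; the evaluation $R^{5}=\sqrt{a^{2}+1}-a$ then follows immediately. I expect the main obstacle to be precisely the determination of $\lambda^{\ast}(26/5)$: since current computer algebra systems cannot solve \eqref{modular25} for $G_{26/5}$ given $G_{130}$, this value cannot be produced by a direct symbolic computation, and the argument must instead proceed by experimental discovery followed by the exact verification against \eqref{modular25} described above. A secondary technical point is the branch selection, both among the admissible roots $a$ of the degree-six relation and among the roots $\beta$ of the modular equation, which I would settle by high-precision numerical comparison.
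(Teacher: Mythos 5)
Your proposal is correct and follows essentially the same route as the paper: the closed form for $g_{130}$ feeding the order-$25$ modular relation \eqref{modular25} to pin down $\lambda^{\ast}(26/5)$ (verified as an exact algebraic identity, with the branch fixed numerically), the degree-two transfer to $\lambda^{\ast}(13/10)$, and substitution into the icosahedral equation at $\tau=i\sqrt{13/10}$ with a final numerical root selection. Your reduction of the icosahedral equation to a degree-six relation in $a$ via $u^{-1}-u=2a$ is a tidy repackaging of the paper's direct verification of the degree-$60$ identity (note the cancelled factor is $u^{6}$, not $u^{30}$), but it is not a different method.
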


\begin{proof}
 According to the relation whereby 
 $$ G_{n} = \left( \frac{1}{2} \left( g_{n}^{8} + \frac{ \sqrt{g_{n}^{8} \left( 1 + g_{n}^{24} 
 \right)} }{ g_{n}^{8} } \right) \right)^{1/8}, $$
 we obtain a closed form for $G_{130}$, according to the closed form 
 $$ g_{130} = \left( \frac{\sqrt{5} + 1}{2} \right)^{3/2} \left( \frac{\sqrt{13} + 3}{2} \right)^{1/2} $$
 given in Ramanujan's Notebooks \cite[p.\ 203]{Berndt1998}. 
 Explicitly, $G_{130}$
 is given by the following Mathematica input. 
\begin{verbatim}
((161 + 72*Sqrt[5])*(119 + 33*Sqrt[13]) + 6*Sqrt[40779771 + 
18236316*Sqrt[5] + 11309683*Sqrt[13] + 5058108*Sqrt[65]])^(1/8)/2^(1/4)
\end{verbatim}
 We claim that $\lambda^{\ast}\big( \frac{26}{5} \big)$ is equal to 
 the sixth root, according to Mathematica's ordering for polynomial roots, of the following polynomial: 
\begin{align}
\begin{split}
 & x^8+14999688 x^7+140280340 x^6+14999688 x^5-280560666 x^4-14999688 x^3 + \\
 & 140280340 x^2-14999688 x+1. 
\end{split}\label{202401291115P899998888877777777M71A}
\end{align}
 According to the relationship in \eqref{202480828083878387A8M1A} 
 between $\lambda^{\ast}$ and Ramanujan's $G$-function, we have that 
\begin{equation}\label{20240129111088888888P88M1A}
 G_{26/5} = 2^{-1/12} \left( \left( \lambda^{\ast}\left( \frac{26}{5} \right) \right)^{2} - 
 \left( \lambda^{\ast}\left( \frac{26}{5} \right) \right)^{4} \right)^{-1/24}. 
\end{equation}
 First, we set $\alpha$ as in \eqref{alphafor25} for $n = 130$, 
 and we then express $\alpha$ in closed form, according to the above closed form for $G_{130}$.
 We then set $b$ as 
\begin{equation}\label{20240129110777777777q7q797q7P7qMq1qA}
 b = \frac{1}{2} \left( 1 - \frac{\sqrt{\mathcal{G}^{24} - 1} }{\mathcal{G}^{12}} \right), 
\end{equation}
 where, by analogy with \eqref{20240129111088888888P88M1A}, we set 
 \begin{equation}\label{20240129111088888888P88M1A}
 \mathcal{G} = 2^{-1/12} \left( \mathcal{L}^{2} - 
 \mathcal{L}^{4} \right)^{-1/24}, 
\end{equation}
 where $\mathcal{L}$ denotes the sixth root of the polynomial in \eqref{202401291115P899998888877777777M71A}. 
 By analogy with \eqref{202747017277797117271P7M1787887A} and \eqref{Qfor25},
 we set $$ P = \sqrt[12]{16 \alpha b (1-\alpha ) (1-b )} $$ and 
 $$ Q = \sqrt[8]{\frac{b (1-b )}{\alpha (1-\alpha )}}. $$ 
 It is thus a matter of routine to verify that the minimal polynomial of 
\begin{equation}\label{202401299199193919PM1A}
 Q+\frac{1}{Q} - \left( -2 \left(P-\frac{1}{P}\right) \right) 
\end{equation}
 with respect to a variable $x$ is $x$, i.e., so that 
 \eqref{202401299199193919PM1A} vanishes. 
 This may be verified with the output corresponding to the following Mathematica input. 
\begin{verbatim}
\[Alpha] = (1 - (8*Sqrt[-1 + ((161 + 72*Sqrt[5])*(119 + 33*Sqrt[13]) + 
6*Sqrt[40779771 + 18236316*Sqrt[5] + 11309683*Sqrt[13] + 
5058108*Sqrt[65]])^3/64])/((161 + 72*Sqrt[5])*(119 + 33*Sqrt[13]) + 
6*Sqrt[40779771 + 18236316*Sqrt[5] + 11309683*Sqrt[13] + 
5058108*Sqrt[65]])^(3/2))/2; 
L = Root[1 - 14999688*#1 + 140280340*#1^2 - 14999688*#1^3 - 
280560666*#1^4 + 14999688*#1^5 + 140280340*#1^6 + 14999688*#1^7 + 
#1^8 & , 6, 0]; 
G = 1/(2^(1/12)*(L^2 - L^4)^(1/24));
b = (1 - Sqrt[-1 + G^24]/G^12)/2;
P = 2^(1/3)*((1 - b)*b*(1 - \[Alpha])*\[Alpha])^(1/12);
Q = (((1 - b)*b)/((1 - \[Alpha])*\[Alpha]))^(1/8); 
MinimalPolynomial[Q + 1/Q - (-2 (P - 1/P)), x]
\end{verbatim}
 So, we have shown that: If we write $G_{26/5}$ in terms of $\lambda^{\ast}\big( \frac{26}{5} \big)$, 
 and then replace $\lambda^{\ast}\big( \frac{26}{5} \big)$ with $\mathcal{L}$, 
 and then redefine $\beta$ in \eqref{betafor25} according to this replacement, 
 the desired modular relation in \eqref{modular25} holds. 

 If we again write $G_{26/6}$ in terms of $\lambda^{\ast}\big( \frac{26}{5} \big)$, 
 but then replace $\lambda^{\ast}\big( \frac{26}{5} \big)$ with a free parameter $\ell$, 
 and again redefine $\beta$ in \eqref{betafor25} according to this replacement, 
 and similarly for $P$ and $Q$, if we then consider 
 the expression in \eqref{202401299199193919PM1A} as a function of $\ell$, we find that 
 there are two positive real roots of this function. 
 So, we can conclude that $\mathcal{L}$ is one of these two values. 
 By considering the possible numerical values of the elliptic lambda function on its domain, 
 we can conclude that $\mathcal{L} = \lambda^{\ast}\big( \frac{26}{5} \big)$. 

 From the relationship between $\lambda^{\ast}$ and Ramanujan's $g$-function, we have that 
 $$ \frac{\sqrt[12]{\frac{1}{\mathcal{L}} - \mathcal{L}}}{\sqrt[3]{2}} = g_{13/10} G_{13/10}. $$
 This, in turn, yields 
 $$ \frac{\sqrt[12]{\frac{1}{\mathcal{L}} - \mathcal{L}}}{\sqrt[3]{2}} 
 = \frac{\sqrt[12]{\frac{1}{\lambda^{\ast}\left(\frac{13}{10}\right)}-\lambda^{\ast}\left(\frac{13}{10}\right)}}{\sqrt[6]{2}
 \sqrt[24]{ \left( \lambda^{\ast}\left(\frac{13}{10}\right) \right)^2 - \left( \lambda^{\ast}\left(\frac{13}{10}\right) \right)^4}}. $$
 Writing 
 $$ \mathcal{C} = \frac{1}{16} \left(\frac{1}{\mathcal{L}} - \mathcal{L} \right)^2, $$
 we thus obtain that 
\begin{equation}\label{202402031114AM717777A}
 \lambda^{\ast}\left( \frac{13}{10} \right) 
 = \frac{\sqrt{\frac{\sqrt{4 \mathcal{C} + 1}}{\mathcal{C}}-\frac{1}{\mathcal{C}}}}{\sqrt{2}}. 
\end{equation}
 By setting $\tau = i \sqrt{\frac{13}{10}}$ in the icosahedral equation in \eqref{icosahedral}, 
 we obtain a polynomial involving 
 $R\big( e^{-\pi \sqrt{ 26/5 }} \big)$ 
 and $\lambda\big( i \sqrt{\frac{13}{10}} \big)$. 
 Equivalently, we obtain a polynomial involving 
 $R\big( e^{-\pi \sqrt{ 26/5 }} \big)$ and $\lambda^{\ast}\big( \frac{13}{10} \big)$: 
\begin{align}
\begin{split}
 & \left(r^{20}-228 r^{15}+494 r^{10}+228 r^5+1\right)^3 \left( \lambda^{\ast}\left(\frac{13}{10}\right) \right)^4 
 \left(1 - \left( \lambda^{\ast}\left(\frac{13}{10}\right) \right)^2\right)^2 \\ 
 & = 256 r^5 \left(r^{10}+11 r^5-1\right)^5 
 \left( \left( \lambda^{\ast}\left(\frac{13}{10}\right) \right)^2 - \left( \lambda^{\ast}\left(\frac{13}{10}\right) \right)^4-1\right)^3, 
\end{split}\label{20240130111929A9M9A}
\end{align}
 where $r = R\big( e^{-\pi \sqrt{ 26/5 }} \big)$. 
 We claim that: By replacing $r$ in \eqref{20240130111929A9M9A} with 
 $\sqrt[5]{ \sqrt{a^2+1}-a}$, for $a$ as in \eqref{2808284808183880818181808A8M1A}, 
 the same equality in \eqref{20240130111929A9M9A} holds. 
 For this latter expression for $r$, it is a matter of routine 
 to verify that the left-hand side of \eqref{20240130111929A9M9A} minus the right-hand side 
 of \eqref{20240130111929A9M9A} vanishes, by considering the minimal polynomial for this difference. 
 This may be verified with the output corresponding to the following Mathematica input. 
\begin{verbatim}
a = 208818 - 93240 Sqrt[5] - 57825 Sqrt[13] + 25900 Sqrt[65];
L = Root[1 - 14999688*#1 + 140280340*#1^2 - 14999688*#1^3 - 
280560666*#1^4 + 14999688*#1^5 + 140280340*#1^6 + 14999688*#1^7 + 
#1^8 &, 6, 0];
c = 1/16 (1/L - L)^2;
tau = I Sqrt[13/10];
r = (Sqrt[a^2 + 1] - a)^(1/5);
lambda = Sqrt[-(1/c) + Sqrt[1 + 4 c]/c]/Sqrt[2];
MinimalPolynomial[(r^20 - 228 r^15 + 494 r^10 + 228 r^5 + 1)^3 lambda^4 
(1 - lambda^2)^2 - 256 r^5 (r^10 + 11 r^5 - 1)^5 (lambda^2 - lambda^4 - 
1)^3 , x]
\end{verbatim}
 So, we have shown that $R\big( e^{-\pi \sqrt{ 26/5 }} \big)$ and 
 $\sqrt[5]{ \sqrt{a^2+1}-a}$ are roots of the polynomial indicated above. 
 By considering all possible numerical values of the roots of this polynomial, 
 we can conclude that $R\big( e^{-\pi \sqrt{ 26/5 }} \big) = \sqrt[5]{ \sqrt{a^2+1}-a}$. 
\end{proof}

\section{A new closed form related to $g_{190}$}\label{20240131852AM1A}
 Again, we require a twofold application of {\tt RootApproximant} 
 in regard to our closed form for \eqref{20240204545404A4M4A}, 
 in the sense that the {\tt RootApproximant} command was of key importance 
 in our experimental discovery of both the closed form for 
 \eqref{20240204545404A4M4A} and of the required value for the 
 elliptic lambda function involved in our proof. 

\begin{theorem}\label{theorem38over5}
 The evaluation 
\begin{equation}\label{20240204545404A4M4A}
 R^{5}\left( e^{-\pi \sqrt{\frac{38}{5}}} \right) = \sqrt{a^2+1} - a 
\end{equation}
 holds, where $$ a = 4165218 + 2945250 \sqrt{2} - 1862095 \sqrt{5} - 1316700 \sqrt{10}. $$
\end{theorem}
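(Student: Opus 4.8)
The plan is to follow the same template established in the proof of Theorem \ref{theoremg130}, adapting each step to the argument $\frac{38}{5}$ and the corresponding class invariant $g_{190}$. First I would obtain a closed form for $g_{190}$ using a known evaluation from Ramanujan's Notebooks; the appearance of $\sqrt{2}$ and $\sqrt{10}$ in the final value of $a$ (alongside $\sqrt{5}$) strongly suggests that $g_{190}$ factors over $\mathbb{Q}(\sqrt{2},\sqrt{5})$, consistent with $190 = 2 \cdot 5 \cdot 19$ and the structure of the class group for discriminant $-760$. From $g_{190}$ I would pass to $G_{190}$ via the standard relation $G_n = \big(\tfrac{1}{2}(g_n^8 + \sqrt{g_n^8(1+g_n^{24})}/g_n^8)\big)^{1/8}$, and thence compute $\alpha$ as in \eqref{alphafor25} for $n = 190$.

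The central step is to determine $\lambda^{\ast}\big(\frac{38}{5}\big)$, which is needed because the icosahedral equation \eqref{icosahedral} at $\tau = i\sqrt{\frac{19}{10}}$ relates $R\big(e^{-\pi\sqrt{38/5}}\big)$ to $\lambda^{\ast}\big(\frac{19}{10}\big)$, and the latter is recovered from $\lambda^{\ast}\big(\frac{38}{5}\big)$ through the $g$- and $G$-function identities exactly as in the $g_{130}$ case. I would first experimentally identify, via \texttt{RootApproximant}, the minimal polynomial of $\lambda^{\ast}\big(\frac{38}{5}\big)$, call its relevant root $\mathcal{L}$. To verify this value rigorously I would set $\mathcal{G} = 2^{-1/12}(\mathcal{L}^2 - \mathcal{L}^4)^{-1/24}$ as the candidate for $G_{38/5}$, define $\beta$ from $\mathcal{G}$ via \eqref{betafor25}, form $P$ and $Q$ as in \eqref{202747017277797117271P7M1787887A}–\eqref{Qfor25}, and check that $Q + \frac{1}{Q} + 2\big(P - \frac{1}{P}\big)$ has minimal polynomial $x$, thereby confirming that $\mathcal{L}$ satisfies the order-25 modular relation \eqref{modular25} paired with $G_{190}$. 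The same disambiguation argument used before — noting that treating $\lambda^{\ast}$ as a free parameter yields finitely many positive real roots, and then selecting the correct one by range considerations for the elliptic lambda function on its domain — would pin down $\mathcal{L} = \lambda^{\ast}\big(\frac{38}{5}\big)$.

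With $\mathcal{L}$ established, I would set $\mathcal{C} = \frac{1}{16}\big(\frac{1}{\mathcal{L}} - \mathcal{L}\big)^2$ and recover $\lambda^{\ast}\big(\frac{19}{10}\big)$ through the $g_{19/10}G_{19/10}$ relation, giving the analogue of \eqref{202402031114AM717777A}. Substituting $\tau = i\sqrt{\frac{19}{10}}$ into \eqref{icosahedral} produces a polynomial relation, analogous to \eqref{20240130111929A9M9A}, between $r = R\big(e^{-\pi\sqrt{38/5}}\big)$ and $\lambda^{\ast}\big(\frac{19}{10}\big)$. The final verification is to substitute $r = \sqrt[5]{\sqrt{a^2+1}-a}$ for the stated $a$ and confirm that the difference of the two sides of this icosahedral identity vanishes, by computing its minimal polynomial. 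Since $R\big(e^{-\pi\sqrt{38/5}}\big)$ is itself a root of the icosahedral polynomial, a comparison of numerical values of the roots then forces the claimed equality.

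The main obstacle I anticipate is the experimental-to-rigorous transition in identifying $\lambda^{\ast}\big(\frac{38}{5}\big)$: the class invariant $G_{190}$ is algebraically more complicated than $G_{130}$, so the minimal polynomial of $\lambda^{\ast}\big(\frac{38}{5}\big)$ will have larger degree and height, straining both the \texttt{RootApproximant} recovery and the subsequent symbolic verification of the order-25 relation \eqref{modular25}. A secondary difficulty is ensuring the correct root is selected at each branch choice — in the definitions of $G$ from $g$, in extracting $\mathcal{L}$ from its minimal polynomial, and in matching $r$ among the roots of the icosahedral polynomial — since an incorrect branch would silently produce a valid-looking but wrong closed form; these must be resolved by careful numerical comparison against the directly computed value of $R\big(e^{-\pi\sqrt{38/5}}\big)$.
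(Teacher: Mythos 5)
Your proposal follows essentially the same route as the paper's own proof: the known closed form $g_{190} = \big(\tfrac{1+\sqrt{5}}{2}\big)^{3/2}(3+\sqrt{10})^{1/2}$ from Ramanujan's Notebooks, passage to $G_{190}$, verification of the candidate $\lambda^{\ast}\big(\tfrac{38}{5}\big)$ (a root of a degree-$16$-height, degree-$8$ palindromic polynomial) via the order-$25$ relation \eqref{modular25}, descent to $\lambda^{\ast}\big(\tfrac{19}{10}\big)$ exactly as in \eqref{202402031114AM717777A}, and a final substitution into the icosahedral equation at $\tau = i\sqrt{19/10}$ followed by numerical root comparison. The approach and all key steps coincide with the paper's argument, so the proposal is correct as a plan.
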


\begin{proof}
 Using the closed form 
\begin{equation}\label{g190closed}
 g_{190} = \left( \frac{1+\sqrt{5}}{2} \right)^{3/2} \left( 3 + \sqrt{10} \right)^{1/2} 
\end{equation}
 given in Part V of Ramanujan's Notebooks \cite[p.\ 203]{Berndt1998}, 
 we can show, as below, 
 that $\lambda^{\ast}\big( \frac{38}{5} \big)$ is equal to the sixth root of 
\begin{align}
\begin{split}
 & 1 - 632783448 x + 12127295380 x^2 - 632783448 x^3 - 24254590746 x^4 + \\
 & 632783448 x^5 + 12127295380 x^6 + 632783448 x^7 + x^8. 
\end{split}\label{20240203111868A8871M}
\end{align}
 From the closed form for $g_{190}$ in \eqref{g190closed}, 
 we find that $G_{190}$ is equal to the expression given by the following Mathematica input. 
\begin{verbatim}
(2/(((1 + Sqrt[5])^12*(3 + Sqrt[10])^4)/4096 + Sqrt[4096/((1 + 
Sqrt[5])^12*(3 + Sqrt[10])^4) + ((1 + Sqrt[5])^24*(3 + 
Sqrt[10])^8)/16777216]))^(-1/8)
\end{verbatim}
 Setting $n = 190$ in the order-25 modular relations
 among \eqref{alphafor25}--\eqref{modular25}, 
 we obtain a closed form for $\alpha$ from the closed form for $G_{190}$. 
 Using a similar argument as in the proof of Theorem \ref{theoremg130}, 
 we can show how the Mathematica output associated with the following input 
 proves the desired evaluation for $\lambda^{\ast}\big( \frac{38}{5} \big)$. 
\begin{verbatim}
\[Alpha] = (1 - (2*Sqrt[2*(-1 + (((1 + Sqrt[5])^12*(3 + Sqrt[10])^4)/4096 + 
Sqrt[4096/((1 + Sqrt[5])^12*(3 + Sqrt[10])^4) + ((1 + Sqrt[5])^24*(3 + 
Sqrt[10])^8)/16777216])^3/8)])/(((1 + Sqrt[5])^12*(3 + Sqrt[10])^4)/4096 + 
Sqrt[4096/((1 + Sqrt[5])^12*(3 + Sqrt[10])^4) + ((1 + Sqrt[5])^24*(3 + 
Sqrt[10])^8)/16777216])^(3/2))/2; 
L = Root[1 - 632783448*#1 + 12127295380*#1^2 - 632783448*#1^3 - 
24254590746*#1^4 + 632783448*#1^5 + 12127295380*#1^6 + 632783448*#1^7 + 
#1^8 & , 6, 0];
G =2^(-1/12) (L^2 - L^4)^(-1/24);
\[Beta] = (1 - Sqrt[-1 + G^24]/G^12)/2;
P = 2^(1/3)*((1 - \[Alpha])*\[Alpha]*(1 - \[Beta])*\[Beta])^(1/12);
Q = (((1 - \[Beta])*\[Beta])/((1 - \[Alpha])*\[Alpha]))^(1/8);
MinimalPolynomial[Q + 1/Q - (-2 (P - 1/P)), x]
\end{verbatim}
 By mimicking the derivation of \eqref{202402031114AM717777A}, this 
 can be used to express $\lambda^{\ast}\big( \frac{19}{10} \big)$ 
 in terms of the specified root of \eqref{20240203111868A8871M}, 
 as suggested in the Mathematica input below. 
\begin{verbatim}
a = 4165218 + 2945250*Sqrt[2] - 1862095*Sqrt[5] - 1316700*Sqrt[10];
L = Root[1 - 632783448*#1 + 12127295380*#1^2 - 632783448*#1^3 - 
24254590746*#1^4 + 632783448*#1^5 + 12127295380*#1^6 + 632783448*#1^7 + 
#1^8 & , 6, 0];
c = (L^(-1) - L)^2/16;
r = (-a + Sqrt[1 + a^2])^(1/5);
lambda = Sqrt[-c^(-1) + Sqrt[1 + 4*c]/c]/Sqrt[2];
MinimalPolynomial[-256*(-1 + lambda^2 - lambda^4)^3*r^5*(-1 + 11*r^5 + 
r^10)^5 + lambda^4*(-1 + lambda^2)^2*(1 + 228*r^5 + 494*r^10 - 228*r^15 + 
r^20)^3,x]
\end{verbatim}
 We may employ a similar line of reasoning as in our proof of Theorem  \ref{theoremg130}, by comparing the numerical values of  $R\big( e^{-\pi 
 \sqrt{ 38/5 }} \big)$ and  $\sqrt[5]{ \sqrt{a^2+1}-a}$ to the possible roots of the polynomial given by the icosahedral equation. 
\end{proof}

\section{A new closed form related to $G_{240}$}
 What is especially remarkable about Theorem \ref{20240203658AM1A} below is 
 given by how \emph{separate} features of the Maple and Mathematica computer algebra systems 
 are required in conjunction, 
 in the derivation of closed forms required to prove 
 Theorem \ref{20240203658AM1A}. Notably, Mathematica does not seem to be able to 
 compute the required minimal polynomial indicated in Lemma \ref{20240204610A2M2A}. 

 The evaluation 
\begin{equation}\label{20240203811AM1A}
 G_{15} = 2^{1/4} \left( \frac{1 + \sqrt{5}}{2} \right)^{1/5}
\end{equation}
 is given in \cite[p.\ 190]{Berndt1998}. 
 Writing $$ \mathcal{C}_{15} 
 = 4 G_{15}^8 \left(G_{15}^8 + \sqrt{-\frac{1}{G_{15}^8} + G_{15}^{16}} \right), $$
 this allows us to obtain a closed form for 
 $$ G_{60} = \frac{ \left(\mathcal{C}_{15} + 
 \frac{\sqrt{\mathcal{C}_{15}^3 + 8}}{\sqrt{\mathcal{C}_{15}}} \right)^{1/8}}{2^{1/4}}, $$
 according to the modular relation in \eqref{20240203851AM2A}. 
 By then writing 
 $$ \mathcal{C}_{60} = 4 G_{60}^8 \left(G_{60}^8 + \sqrt{-\frac{1}{G_{60}^8} + G_{60}^{16}} \right), $$
 we may obtain a closed form for 
\begin{equation}\label{2024702074537292A272M2A}
 G_{240} = \frac{ \left( \frac{ \mathcal{C}_{60} + 
 \sqrt{\mathcal{C}_{60}^3 + 8}}{\sqrt{\mathcal{C}_{60}}} \right)^{1/8}}{2^{1/4}}, 
\end{equation}
 again by the modular relation in \eqref{20240203851AM2A}. By then setting 
\begin{equation}\label{20240203624PM1A}
 \alpha = \frac{1}{2} \left(1 - \frac{ \sqrt{G_{240}^{24} - 1}}{G_{240}^{12}} \right) 
\end{equation}
 so as to agree with \eqref{alphafor25}, this leads us toward the following result. 

\begin{lemma}\label{20240204610A2M2A}
 The minimal polynomial for $\alpha$, as defined in \eqref{20240203624PM1A}, equals 
\begin{align*}
 & x^{16} - 85646102224053010448 x^{15} + 59547310292447325609394296 x^{14} - \\ 
 & 63252200262236651831473406512 x^{13} + 525839570761535689444949755676 x^{12} - \\
 & 1979219931663657931544660611344 x^{11} + 4551988046736278352673918558024 x^{10} - \\ 
 & 7226577193130665396845546777776 x^9 + 8382324320686645930076221747782 x^8 - \\ 
 & 7226577193130665396845546777776 x^7 + 4551988046736278352673918558024 x^6 - \\ 
 & 1979219931663657931544660611344 x^5 + 525839570761535689444949755676 x^4 - \\
 & 63252200262236651831473406512 x^3 + 59547310292447325609394296 x^2 - \\
 & 85646102224053010448 x + 1. 
\end{align*}
\end{lemma}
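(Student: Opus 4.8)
The plan is to reduce the lemma to a finite algebraic computation by producing an explicit closed form in radicals for $G_{240}$ and substituting it into the definition \eqref{20240203624PM1A} of $\alpha$. Everything needed is determined once $G_{15}$ is known, since the intermediate quantities $\mathcal{C}_{15}$, $G_{60}$, $\mathcal{C}_{60}$ and $G_{240}$ are obtained from \eqref{20240203811AM1A} by two successive applications of the duplication relation \eqref{20240203851AM2A} (together with the standard conversion between $g_n$ and $G_n$). Thus the lemma is, in principle, a purely computational statement: I would exhibit the displayed degree-$16$ polynomial as one that annihilates $\alpha$ and is irreducible over $\mathbb{Q}$, whence, being monic, it is the minimal polynomial.

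First I would substitute $G_{15} = 2^{1/4}\bigl((1+\sqrt5)/2\bigr)^{1/5}$ into the expression for $\mathcal{C}_{15}$ and simplify to obtain $G_{60}$ in radical form, then repeat the procedure, forming $\mathcal{C}_{60}$ and hence $G_{240}$ via \eqref{2024702074537292A272M2A}. Substituting the result into \eqref{20240203624PM1A} yields $\alpha$ as a deeply nested radical. It is worth noting, via \eqref{202480828083878387A8M1A}, that $\alpha = \lambda^{\ast}(240)^2$ is a singular value lying in a $2$-power extension of $\mathbb{Q}$; consequently, even though $G_{15}$ carries a genuine fifth root, it must cancel completely from $\alpha$ when the powers $G_{240}^{12}$ and $G_{240}^{24}$ are formed. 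Confirming this cancellation symbolically, so that $\alpha$ generates a field of degree $16$ rather than $80$, is the first nontrivial hurdle.

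The main obstacle, however, is the elimination of the nested radicals to produce the minimal polynomial. The expression for $\alpha$ involves fourth, eighth, twelfth and twenty-fourth roots together with several layers of nested square roots, and the resulting integer coefficients have roughly twenty digits. I would carry out the elimination by introducing an auxiliary variable for each radical, recording its defining power relation, and computing successive resultants to remove them one at a time; this is where Maple's algebraic-number facilities (repeated resultant elimination, or its {\tt evala} machinery) are needed, since a direct invocation of {\tt MinimalPolynomial} in Mathematica does not succeed on an input of this nesting depth. The elimination will in general return a polynomial carrying spurious factors, so I would factor it over $\mathbb{Q}$ and select the irreducible factor that vanishes at $\alpha$, verified numerically to high precision.

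Finally, to certify the identification, I would confirm algebraically that the chosen factor annihilates $\alpha$ and establish its irreducibility over $\mathbb{Q}$ — for instance by a reduction modulo a suitable prime that leaves the polynomial irreducible, or by checking directly that $[\mathbb{Q}(\alpha):\mathbb{Q}] = 16$ from the tower of quadratic extensions built up through the duplication steps. Since the candidate is monic, irreducible, and has $\alpha$ as a root, it is the minimal polynomial of $\alpha$, which completes the proof.
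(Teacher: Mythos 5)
Your proposal is correct and follows essentially the same route as the paper: build an explicit radical closed form for $G_{240}$ from $G_{15}$ via two applications of \eqref{20240203851AM2A}, substitute into \eqref{20240203624PM1A}, and have a computer algebra system (Maple's \texttt{evala(Minpoly(...))} in the paper, resultant elimination plus factorization and an irreducibility check in your write-up) produce the degree-$16$ minimal polynomial; your added remarks on why the non-quadratic radical from $G_{15}$ must cancel and on certifying irreducibility only make explicit what the paper delegates to Maple. One small caution: you copied the exponent $1/5$ from the displayed equation \eqref{20240203811AM1A}, whereas the paper's own Maple code (and the numerically correct class invariant) uses $G_{15}=2^{1/4}\bigl((1+\sqrt{5})/2\bigr)^{1/3}$, so the computation should be run with the cube root.
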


\begin{proof}
 Inputting 
\begin{verbatim}
G15closed := 2^(1/4)*((1 + sqrt(5))/2)^(1/3);
C15 := 4*G15closed^8*(G15closed^8 + sqrt(-1/G15closed^8 + G15closed^16));
G60closed := (C15 + sqrt(C15^3 + 8)/sqrt(C15))^(1/8)/2^(1/4);
C60 := 4*G60closed^8*(G60closed^8 + sqrt(-1/G60closed^8 + G60closed^16));
G240closed := (C60 + sqrt(C60^3 + 8)/sqrt(C60))^(1/8)/2^(1/4); 
alphap := 1/2*(1 - sqrt(G240closed^24 - 1)/G240closed^12);
evala(Minpoly(alphap, x))
\end{verbatim}
 into Maple, the associated output confirms the desired result. 
\end{proof}

\begin{theorem}\label{20240203658AM1A}
 The evaluation 
 $$ R^{5}\left( e^{-\pi \sqrt{\frac{48}{5}}} \right) = \sqrt{a^2 + 1} - a $$
 holds, where
 $$ a = 2118 + \frac{1885 \sqrt{5}}{2} + \frac{4875 \sqrt{3}}{4} + \frac{2175 \sqrt{15}}{4}. $$
\end{theorem}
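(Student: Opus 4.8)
The plan is to reproduce, for the argument $48/5$, the three-stage strategy behind Theorems \ref{theoremg130} and \ref{theorem38over5}, with the class-invariant data now supplied by the tower $G_{15}, G_{60}, G_{240}$ obtained by iterating the doubling relation \eqref{20240203851AM2A} from the closed form \eqref{20240203811AM1A}. The first stage is to pin down $\lambda^{\ast}\!\big(\frac{48}{5}\big)$. Since $\frac{240}{25} = \frac{48}{5}$, the order-$25$ modular relation \eqref{modular25} with $n = 240$ links the quantity $\alpha$ of \eqref{20240203624PM1A}, attached to $G_{240}$, to the quantity $\beta$ of \eqref{betafor25}, attached to $G_{48/5}$ and hence to $\lambda^{\ast}\!\big(\frac{48}{5}\big)$ via \eqref{202480828083878387A8M1A}. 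The crucial difference from the earlier proofs is that $G_{240}$ is too deeply nested for the relevant closed form to be substituted directly; instead I would invoke Lemma \ref{20240204610A2M2A}, which represents $\alpha$ by its degree-$16$ minimal polynomial, together with the minimal polynomial for $\lambda^{\ast}\!\big(\frac{48}{5}\big)$ conjectured from {\tt RootApproximant}. Writing $G_{48/5}$ in terms of $\lambda^{\ast}\!\big(\frac{48}{5}\big)$ as in \eqref{20240129111088888888P88M1A}, redefining $\beta$, $P$, and $Q$ accordingly, and checking that $Q + \frac1Q + 2\big(P - \frac1P\big)$ has minimal polynomial $x$ verifies \eqref{modular25}; examining the two positive real roots of this relation as a function of a free parameter $\ell$ and comparing against the numerical range of $\lambda^{\ast}$ then selects $\lambda^{\ast}\!\big(\frac{48}{5}\big)$ uniquely, exactly as in the proof of Theorem \ref{theoremg130}.

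The second stage is to descend to the argument required by the icosahedral equation. Because $2\sqrt{12/5} = \sqrt{48/5}$, I would set $\tau = i\sqrt{12/5}$, so that $R\big(e^{2\pi i \tau}\big) = R\big(e^{-\pi\sqrt{48/5}}\big)$ and $\lambda(\tau) = \lambda^{\ast}\!\big(\frac{12}{5}\big)^2$. From $\lambda^{\ast}(n) = g_n^6\big(\sqrt{g_n^{12}+g_n^{-12}} - g_n^6\big)$ one has $\frac{1}{\lambda^{\ast}(n)} - \lambda^{\ast}(n) = 2g_n^{12}$, which, combined with $g_{48/5} = 2^{1/4} g_{12/5} G_{12/5}$ and the standard expressions of $g_m, G_m$ through $\lambda^{\ast}(m)$, gives $(g_{12/5}G_{12/5})^{12} = \frac{1}{16}\big(\frac{1}{\lambda^{\ast}(48/5)} - \lambda^{\ast}(48/5)\big)$. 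Setting $\mathcal{C} = \frac{1}{16}\big(\frac{1}{\lambda^{\ast}(48/5)} - \lambda^{\ast}(48/5)\big)^2$ and solving then produces a closed form for $\lambda^{\ast}\!\big(\frac{12}{5}\big)$ identical in shape to \eqref{202402031114AM717777A}. Feeding this value and $r = R\big(e^{-\pi\sqrt{48/5}}\big)$ into \eqref{icosahedral} at $\tau = i\sqrt{12/5}$ yields a single polynomial relation between $r$ and $\lambda^{\ast}\!\big(\frac{12}{5}\big)$, of exactly the form \eqref{20240130111929A9M9A} with $\frac{13}{10}$ replaced by $\frac{12}{5}$.

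The final stage is a root-matching argument: substitute $r = \sqrt[5]{\sqrt{a^2+1}-a}$ with $a = 2118 + \frac{1885\sqrt5}{2} + \frac{4875\sqrt3}{4} + \frac{2175\sqrt{15}}{4}$ into that icosahedral relation, verify that the difference of its two sides has minimal polynomial $x$, and then compare the finitely many numerical roots of the icosahedral polynomial against $R\big(e^{-\pi\sqrt{48/5}}\big)$ to fix the correct fifth root. I expect the main obstacle to be entirely computational and concentrated in Lemma \ref{20240204610A2M2A}: the algebraic number $\alpha$ built from $G_{240}$ carries a tall tower of nested radicals (already $G_{15}$ involves both a fourth root of $2$ and a cube root of the golden ratio over $\mathbb{Q}(\sqrt{5})$), and Mathematica does not succeed in computing its minimal polynomial, so that step must be carried out in Maple via {\tt evala(Minpoly(...))} before the output is passed back to Mathematica for the modular and icosahedral verifications. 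Once that degree-$16$ polynomial is in hand, the remaining checks, although they involve high-degree minimal polynomials, are mechanical.
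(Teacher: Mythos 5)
Your proposal follows essentially the same route as the paper's proof: pinning down $\lambda^{\ast}\!\big(\tfrac{48}{5}\big)$ via the order-$25$ relation \eqref{modular25} with $\alpha$ built from the tower $G_{15} \to G_{60} \to G_{240}$ and represented by the Maple-computed minimal polynomial of Lemma \ref{20240204610A2M2A}, then descending to $\lambda^{\ast}\!\big(\tfrac{12}{5}\big)$ via \eqref{20240203851AM2A} and verifying the icosahedral equation at $\tau = i\sqrt{12/5}$ by a minimal-polynomial and root-matching check. Apart from an immaterial difference in normalization (the paper's code uses $c=\tfrac14(1/L-L)^2$ rather than your $\mathcal{C}=\tfrac1{16}(1/L-L)^2$, with the icosahedral identity rewritten accordingly), this matches the paper's argument.
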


\begin{proof}
 We claim that $\lambda^{\ast}\big( \frac{48}{5} \big)$ is equal to 
 the third root of 
\begin{align*}
 & 1 - 9254518800 x + 7997750214776 x^2 - 238623871222320 x^3 - \\
 & 395374840051940 x^4 + 722354076987120 x^5 + 1549442293997384 x^6 - \\
 & 413352207084720 x^7 - 2183392919932346 x^8 - 413352207084720 x^9 + \\
 & 1549442293997384 x^{10} + 722354076987120 x^{11} - 395374840051940 x^{12} - \\
 & 238623871222320 x^{13} + 7997750214776 x^{14} - 9254518800 x^{15} + x^{16}. 
\end{align*}
 To prove this evaluation for $\lambda^{\ast}$, our strategy 
 is to use the $G$-value in \eqref{20240203811AM1A} together with modular relations. 

 Let $\mathcal{L}$ denote the desired value for $\lambda^{\ast}\big( \frac{48}{5} \big)$. 
 By rewriting the $\beta$-value in \eqref{betafor25} 
 in terms of $\lambda^{\ast}\big( \frac{48}{5} \big)$, 
 the Mathematica output associated with the following input gives us that:
 By replacing $\lambda^{\ast}\big( \frac{48}{5} \big)$
 with the desired value $\mathcal{L}$, the desired modular relation of order 25 in 
 \eqref{modular25} is satisfied. By replacing $\lambda^{\ast}\big( \frac{48}{5} \big)$ 
 with a free variable and consider all possible numerical values 
 such that \eqref{modular25} holds, we can show that $\mathcal{L} = \lambda^{\ast}\big( \frac{48}{5} \big)$. 
 Lemma \ref{20240204610A2M2A} is being used to express $\alpha$ in the following input. 
\begin{verbatim}
L = Root[1 - 9254518800 # + 7997750214776 #^2 - 
238623871222320 #^3 - 395374840051940 #^4 + 722354076987120 #^5 + 
1549442293997384 #^6 - 413352207084720 #^7 - 2183392919932346 #^8 - 
413352207084720 #^9 + 1549442293997384 #^10 + 722354076987120 #^11 - 
395374840051940 #^12 - 238623871222320 #^13 + 7997750214776 #^14 - 
9254518800 #^15 + #^16& , 3, 0];
\[Alpha] = Root[#1^16 - 85646102224053010448*#1^15 + 
59547310292447325609394296*#1^14 - 63252200262236651831473406512*#1^13 + 
525839570761535689444949755676*#1^12 - 
1979219931663657931544660611344*#1^11 + 
4551988046736278352673918558024*#1^10 - 
7226577193130665396845546777776*#1^9 + 
8382324320686645930076221747782*#1^8 - 
7226577193130665396845546777776*#1^7 + 
4551988046736278352673918558024*#1^6 - 
1979219931663657931544660611344*#1^5 + 
525839570761535689444949755676*#1^4 - 63252200262236651831473406512*#1^3 + 
59547310292447325609394296*#1^2 - 85646102224053010448*#1 + 1 &, 1, 0];
\[Beta] = 1/2 (1 - 2 Sqrt[L^2 - L^4] Sqrt[1/(4 L^2 - 4 L^4) - 1]);
P = (16 \[Alpha] \[Beta] (1 - \[Alpha]) (1 - \[Beta]))^(1/12);
Q = ((\[Beta] (1 - \[Beta]))/(\[Alpha] (1 - \[Alpha])) )^(1/8);
MinimalPolynomial[(Q + 1/Q) - (-2 (P - 1/P)), x]
\end{verbatim}
 In the icosahedral equation, we set $\tau = i \sqrt{\frac{12}{5}}$. 
 We then rewrite $\lambda\big( i \sqrt{\frac{12}{5}} \big)$ 
 as $\big(\lambda^{\ast}\big( \frac{12}{5} \big)\big)^{2}$. 
 The modular equation in \eqref{20240203851AM2A}
 then allows us to rewrite $\lambda^{\ast}\big( \frac{12}{5} \big)$ in terms 
 of $\lambda^{\ast}\big( \frac{48}{5} \big)$. 
 Inputting the following into Mathematica, 
 for the same value of {\tt L} indicated above, 
 this shows us that the desired value for 
 $R\big( e^{-\pi \sqrt{48/5}} \big)$ satisfies the 
 icosahedral equation for the corresponding values of $\lambda^{\ast}$. 
\begin{verbatim}
c = 1/4 (1/L - L)^2;
a = 2118 + (1885 Sqrt[5])/2 + (4875 Sqrt[3])/4 + (2175 Sqrt[15])/4;
r = ((Sqrt[a^2 + 1] - a)^(1/5));
MinimalPolynomial[64 (4 + c)^3 r^5 (-1 + 11 r^5 + r^10)^5 - (-c^2 (1 + 
228 r^5 + 494 r^10 - 228 r^15 + r^20)^3), x]
\end{verbatim}
 Again, we may employ a similar line of reasoning as in our proof of Theorem 
 \ref{theoremg130}, by comparing the numerical values of 
 $R\big( e^{-\pi \sqrt{ 48/5 }} \big)$ and 
 $\sqrt[5]{ \sqrt{a^2+1}-a}$ to the possible roots of the 
 polynomial given by the icosahedral equation. 
\end{proof}

\section{A conjectured evaluation}
 We briefly conclude with the following experimentally discovered closed form. 
 The value $\frac{16}{15}$ involved in Conjecture \ref{20240204537} 
 presents an obstacle in the following sense. 
 In the hope of applying the closed form for $G_{240}$ 
 given indirectly in \eqref{2024702074537292A272M2A}, 
 we would require a combination of applications of 
 modular relations of order 9 and of order 25. 
 This proves to be problematic, in the sense that 
 we would not be able to express the required $\alpha$-value 
 explicitly, as in \eqref{20240203624PM1A}. 

\begin{conjecture}\label{20240204537}
 The evaluation 
 $$ R^{5}\left( e^{-\pi \sqrt{\frac{16}{15}}} \right) 
 = \sqrt{a^2 + 1} - a $$
 holds, where
 $$ a = \frac{1}{4} \left(8472+4875 \sqrt{3}-3770 \sqrt{5}-2175 \sqrt{15}\right) $$
\end{conjecture}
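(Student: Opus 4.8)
The plan is to follow the architecture of the proof of Theorem~\ref{20240203658AM1A}, collapsing the evaluation to a single use of the icosahedral equation once an auxiliary value of the elliptic lambda function is in hand. Since $e^{-\pi\sqrt{16/15}} = e^{-2\pi\sqrt{4/15}}$, I would set $\tau = i\sqrt{4/15}$ in \eqref{icosahedral}, so that $r(\tau) = R\big(e^{-\pi\sqrt{16/15}}\big)$, and I would rewrite $\lambda\big(i\sqrt{4/15}\big)$ as $\big(\lambda^{\ast}(4/15)\big)^{2}$. The icosahedral equation then becomes a polynomial relation between $r = R\big(e^{-\pi\sqrt{16/15}}\big)$ and $\lambda^{\ast}(4/15)$ of exactly the shape of \eqref{20240130111929A9M9A}. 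Granting a closed form for $\lambda^{\ast}(4/15)$, the endgame is identical to the earlier theorems: substitute $r = \sqrt[5]{\sqrt{a^2+1}-a}$ for the conjectured $a$, verify by a minimal-polynomial computation that the two sides of the icosahedral relation agree, and conclude by comparing the numerical value of $R\big(e^{-\pi\sqrt{16/15}}\big)$ with those of the admissible roots.

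All of the difficulty is thus concentrated in producing a closed form for $\lambda^{\ast}(4/15)$. The natural route reuses the data assembled in the proof of Theorem~\ref{20240203658AM1A}. Beginning with the explicit value \eqref{20240203811AM1A} of $G_{15}$, the iterated Landen relation \eqref{20240203851AM2A} (in the $\mathcal{C}_{15},\mathcal{C}_{60}$ form) gives explicit radical expressions for $G_{60}$ and for $G_{240}$, and the order-$25$ relation \eqref{modular25} then pins down $\lambda^{\ast}(48/5)$ exactly as in Theorem~\ref{20240203658AM1A}. Because $\tfrac{48}{5} = 9\cdot\tfrac{16}{15}$ and $\tfrac{16}{15} = 4\cdot\tfrac{4}{15}$, one would next descend from $\tfrac{48}{5}$ to $\tfrac{16}{15}$ by an order-$9$ modular relation, obtaining $\lambda^{\ast}(16/15)$, and then descend from $\tfrac{16}{15}$ to $\tfrac{4}{15}$ by the Landen relation $g_{16/15} = 2^{1/4} g_{4/15} G_{4/15}$; the final step, exactly as in the derivation of \eqref{202402031114AM717777A}, converts the closed form for $\lambda^{\ast}(16/15)$ into one for $\lambda^{\ast}(4/15)$.

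The main obstacle, as flagged in the remark preceding the conjecture, is the order-$9$ descent from $\tfrac{48}{5}$ to $\tfrac{16}{15}$. In every successful instance above, the larger of the two arguments in the modular relation was a $G$-value available in \emph{explicit} radicals over $\mathbb{Q}$ --- $G_{130}$, $G_{190}$, or the $G_{240}$ of Lemma~\ref{20240204610A2M2A} --- so that the $\alpha$-value of \eqref{20240203624PM1A} could be formed and its integer minimal polynomial computed, while only the $\beta$-value attached to the unknown argument was carried as an algebraic number. To run the order-$9$ relation here, the corresponding $\alpha$ would have to be built from $G_{48/5}$, which is available only implicitly, as $2^{-1/12}\big(\lambda^{2}-\lambda^{4}\big)^{-1/24}$ evaluated at the root $\lambda = \lambda^{\ast}(48/5)$ of the degree-$16$ polynomial exhibited in Theorem~\ref{20240203658AM1A}. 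Hence the required $\alpha$-value cannot be expressed explicitly in the manner of \eqref{20240203624PM1A}, and the associated minimal-polynomial verification --- already at the edge of feasibility in Lemma~\ref{20240204610A2M2A}, where Maple rather than Mathematica was needed --- does not go through with present computer-algebra tools. Merging the two descents into a single order-$225$ step from the explicit $G_{240}$ does not help, since no modular relation of the clean form \eqref{modular25} is available at that level.

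A plausible way to break the impasse, which I would attempt, is to compute directly an integer minimal polynomial for the relevant $\alpha$ --- equivalently, for $G_{48/5}$ --- from the known degree-$16$ polynomial for $\lambda^{\ast}(48/5)$, in Maple as was necessary for Lemma~\ref{20240204610A2M2A}, and then to feed that polynomial into the order-$9$ verification as a \texttt{Root} object. Alternatively, one could bypass the $G$-function and work with an order-$9$ modular equation for $\lambda^{\ast}$ directly, reducing the computation to the single unknown $\lambda^{\ast}(16/15)$. Either way, once $\lambda^{\ast}(4/15)$ is pinned down as an explicit algebraic number, the icosahedral step and the numerical comparison of roots complete the proof precisely as in Theorems~\ref{theoremg130} and~\ref{20240203658AM1A}.
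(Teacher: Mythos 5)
Your structural plan is the right one, and your diagnosis of the obstruction is exactly the one the paper itself records: the statement you were asked to prove is stated as a \emph{conjecture}, and the remark immediately preceding Conjecture~\ref{20240204537} explains that the proof fails for precisely the reason you rediscover --- the order-$9$ descent from $\tfrac{48}{5}$ to $\tfrac{16}{15}$ would require an $\alpha$-value built from $G_{48/5}$, which is available only implicitly (through the degree-$16$ polynomial for $\lambda^{\ast}\!\left(\tfrac{48}{5}\right)$) rather than in explicit radicals as in \eqref{20240203624PM1A}. So there is no proof in the paper to compare against, and your proposal does not supply one either. The icosahedral endgame you describe (set $\tau = i\sqrt{4/15}$, substitute $r = \sqrt[5]{\sqrt{a^2+1}-a}$, verify via a minimal-polynomial computation, compare numerical values of the admissible roots) is sound and matches the template of Theorems~\ref{theoremg130} and~\ref{20240203658AM1A}, but it is conditional on an explicit algebraic value for $\lambda^{\ast}\!\left(\tfrac{4}{15}\right)$ that you never actually produce. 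The entire content of the problem is concentrated in the step you defer with ``which I would attempt.''

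Concretely, the gap is this: you propose to compute an integer minimal polynomial for $G_{48/5}$ (equivalently for the order-$9$ $\alpha$) from the degree-$16$ polynomial for $\lambda^{\ast}\!\left(\tfrac{48}{5}\right)$ and feed it into an order-$9$ verification, but you give no evidence that this computation terminates --- Lemma~\ref{20240204610A2M2A} was already at the edge of feasibility with an \emph{explicit} radical input, and here the input is a root object of a degree-$16$ polynomial composed with radicals, whose minimal polynomial could have very large degree. You also never exhibit the order-$9$ modular relation itself (the paper records only the order-$25$ relation \eqref{modular25} and the Landen-type relation \eqref{20240203851AM2A}), nor the analogue of the root-disambiguation argument needed to certify that the algebraic number so obtained really is $\lambda^{\ast}\!\left(\tfrac{16}{15}\right)$. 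Until those computations are carried out and verified, what you have is a credible research programme --- essentially the one the author implicitly proposes and reports as currently infeasible --- not a proof of the conjecture.
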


\subsection*{Acknowledgements}
 The author is grateful to acknowledge support from a Killam Postdoctoral Fellowship from the Killam Trusts. 
 The author wants to thank Karl Dilcher, Shane Chern, and Lin Jiu 
 for useful comments related to this paper.

 \

John M.\ Campbell

{\tt jmaxwellcampbell@gmail.com}

Department of Mathematics and Statistics

Dalhousie University

\end{document}